\numberwithin{equation}{section}
\newtheorem{theo}{Theorem}
\newtheorem{rem}{Remark}
\newtheorem{lem}{Lemma}
\newtheorem{definition}{Definition}
\begin{document}

\title{The Dynamics of Biological models with Optimal Harvesting }

\author{Sadiq Al-Nassir\\ Department of Mathematics, College of Science\\ University of Baghdad, Iraq\\
e-mail:sadiq.n@sc.uobaghdad.edu.iq\\ }

\maketitle

\begin{abstract}
This paper aims to introduce a concept of an equilibrium point of a dynamical system which will call it almost global asymptotically stable. A biological prey-predator model is also analyzed  with a modification  function growth in prey species. The conditions of the local stable and existence of all its  equilibria are given. After that the model is extended to an optimal control problem to obtain an optimal harvesting strategy. The discrete time version of  Pontryagin's maximum principle is applied to solve the optimality problem. The characterization of the optimal harvesting variable and the adjoint variables are derived. Finally numerical simulations of various set of values of parameters are provided to confirm the theoretical findings.

\end{abstract}
Key Words:\small{Global  asymptotically stable, \ discrete-time predator-prey system,\ optimal harvesting.}

\section{\small{Introduction }}
The theory of mathematical models  plays an important role for studying populations behavior. These  models can be described in continuous time case or in discrete time case by a system of ordinary differential equations or a system of deference equations respectively, this description  depends on the study problem . Discrete time systems are suitable for  populations that reproduce at specific times each month or year or each circle, this can be seen in many insects populations, marine fish, and plants. There are  variety of studies in the literature that analyzed and investigated the dynamical behavior of this kind of models, we refer to  \cite{goh,hastings,kar,hamoudi},and the references therein.

The essential concept in mathematical modeling is the stability of an equilibrium point so that an equilibrium point is called globally asymptotically stable  if the solution approaches to this equilibrium regardless of the initial condition, while it is called locally asymptotically stable if there exists a neighborhood of this equilibrium such that from every initial condition within this neighborhood the solution  approaches to it.
 Some authors in the literatures(\cite{Aziz,cushing,das,wang}) proved that the unique positive  equilibrium point in their models is global asymptotically stable even one or more than one boundary equilibrium point always exits. They ignored or excluded  the boundary points  from the domain of the function in  their biological models.In these cases they  violated the definition of the global asymptotically stable. To  treat  this situation we introduce a definition of an equilibrium point which we  call it  almost global asymptotically stable, simply means an equilibrium point is global asymptotically stable in the interior of the domain. \\ A system of difference equations may have and show  a rich and more complicated dynamical behaviors even for a simple one dimensional system. For example the logistic equation which is very known equation its  equilibria  can vary from stability behavior to  chaotic behavior\cite{may,Bischi,Puu}. Many researchers investigated and analyzed different kind of  two or more than two  dimensional models in ecology \cite{Gop,Gu,Liu}, they derived conditions for local and global stability of solutions as well as the existence of periodic solutions  \cite{Hu,Liu2010}. Some authors have discussed and assumed that the life of  populations have two stages immature and adults. However stage-structured or age structured models are are considered in the literatures  \cite{Wikan2012,wikan1995,xiao}.\\  In this study we will introduce a concept of an equilibrium point of a dynamical system as well as  biological model, prey predator models with modification growth function in prey species is investigated in details. The general form is given  by

\begin{equation}
\begin{aligned}
\label{a1}
x_{t+1}&=\frac{r x_{t }^{m}}{(1+ke^{bx_{t}})^n}-ay_{t}h(x_{t})\\
y_{t+1}& =y_{t}(-c)+dh(x_{t}))
\end{aligned}
\end{equation}
The continuous time version of model (\ref{a1}) can be written in the following form
\begin{equation}
\begin{aligned}
\label{a2}
x^{'}(t)&=\frac{r x(t)^{m}}{(1+ke^{bx(t)})^n}-ay_{t}h(x(t))\\
y^{'}(t)& =y(t)(-c)+dh(x(t)))
\end{aligned}
\end{equation}
Where $x_{t},(x(t)),\ y_{t}(y(t))$\ and $ h(x_t)$\ are the prey population density, the predator population density and the predator functional response at time $t$ respectively. The parameters $r, c,a,$and $d$  are model parameters supposing only positive values.These  parameters are the  intrinsic growth rate of prey species, the mortality  rate of predators species, the maximum per capita killing rate,and  the conversion rate  predator  respectively.While $b,m,k$\ and \ $n$  are positive constants.\\
This paper is organized as following: In section 2 we will discuss the model (\ref{a1}) with absent the predator species when $n=m=b=1$. In section 3 the prey- predator system is analyzed and all behavior of its equilibria are investigated. In section 4 the model is extended to an optimal control problem.The discrete time version of  Pontryagin's maximum principle is applied to solve the optimality problem. In section 5 numerical simulations is provided to confirm the theoretical results.  Finally  conclusion  is given.
\section{Single species model}
\begin{definition}
Consider  the following nonlinear discrete dynamical system $x_{t+1}=f(x_t),$\ where \ $f:D\subset R^n \rightarrow R^n$. An equilibrium point $x_e$, that means $f(x_e)=x_e$ is said to be almost global asymptotically stable in $D$ if it is global asymptotically stable in $D-\partial D$.\\
For the continuous time case the definition will be as follows:
Consider  the following nonlinear continuous dynamical system $x^{'}(t)=f(x(t)),$\ where \ $f: D\subset R^n \rightarrow R^n$. An equilibrium point $x_e$, that means $f(x_e)=0$ is said to be almost global asymptotically stable in $D$ if it is global asymptotically stable in $D-\partial D$.
\end{definition}
\begin{rem}
1- It is clear that every global asymptotically stable is almost global asymptotically stable, and the converse is not true. For this one can see in (\cite{das} authors proved that the positive equilibrium point $S_3=(x_1^*,x_2^*)$ there, is globally asymptotically stable only in the interior of the domain which means it is almost global asymptotically stable. However it is not global asymptotically stable because the trivial equilibrium point always exists in their model.\\
2- Clearly that  if\ $ x_e$   is  almost global asymptotically stable then it is local asymptotically stable,but  the converse is not true.For example :\\
Consider the the following system
\end{rem}
\begin{equation}
\begin{aligned}
\nonumber
x_{t+1}&=s^2x_{t }(1-x_t)(1-sx_t+sx_t^2)
\end{aligned}
\end{equation}

Where $s$ is constant parameter. If $s>3$ the system has  two positive equilibria ,namely $x^*=\frac{1+s-\sqrt{(s-1)^2-4}}{2s}\quad\text{and}\quad y^*=\frac{1+s+\sqrt{(s-1)^2-4}}{2s}$. If $s=3.1$ then $x^*=0.558$ and \ $y^*=0.7646$.\ The point  $x^*=0.558$  is locally stable which is  not almost global asymptotically stable point.\\
  Now we will investigate the dynamics of single species model of (\ref{a1}) in the absent of the predator species and $n=m=b=1$. Thus the model will be as the following
\begin{equation}
\begin{aligned}
\label{a3}
x_{t+1}&=\frac{r x_{t }}{1+ke^{x_{t}}}
\end{aligned}
\end{equation}
The model(\ref{a3}) has two equilibria , namely, the trivial equilibrium point  $x_1=0,$\ and the unique positive point $x_2=ln(\frac{r-1}{k})$. The trivial equilibrium point always exists, while the positive equilibrium  exists  when $r>k+1$. The following lemma gives the behavior of its equilibria.
\begin{lem}
For the  model (\ref{a3}) we have :\\
\begin{enumerate}
\item The trivial equilibrium point, $x_1=0,$\ is locally stable (sink) point if and only if  $r<k+1$,
and it is unstable (source) point if and only if  $r>k+1$,while it is non-hyperbolic point if and only if $r=k+1$.
\item The equilibrium point $x_2=ln(\frac{r-1}{k}),$\  is locally stable (sink) point if $k\in ((r-1)e^{-\frac{2r}{r-1}},(r-1))$. It is unstable (source) point if and only if $k< (r-1)e^{-\frac{2r}{r-1}}$,while it is non-hyperbolic point if and only if $k=(r-1)e^{-\frac{2r}{r-1}}$.
\end{enumerate}
\end{lem}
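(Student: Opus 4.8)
The plan is to treat \eqref{a3} as a one-dimensional discrete dynamical system $x_{t+1}=f(x_t)$ with $f(x)=\dfrac{r x}{1+ke^{x}}$ and to apply the standard linearization test for maps of the line: a hyperbolic fixed point $x_e$ is a sink when $|f'(x_e)|<1$, a source when $|f'(x_e)|>1$, and is non-hyperbolic precisely when $|f'(x_e)|=1$. Thus the whole statement reduces to computing $f'$ once and evaluating it at the two equilibria $x_1=0$ and $x_2=\ln\!\big(\tfrac{r-1}{k}\big)$.

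First I would differentiate, obtaining
\[
f'(x)=\frac{r\bigl(1+ke^{x}(1-x)\bigr)}{(1+ke^{x})^{2}}.
\]
For item 1, putting $x=x_1=0$ gives $f'(0)=\dfrac{r(1+k)}{(1+k)^{2}}=\dfrac{r}{1+k}$, which is positive, so $|f'(0)|=\dfrac{r}{1+k}$; comparing this quantity with $1$ yields immediately that $x_1$ is a sink iff $r<k+1$, a source iff $r>k+1$, and non-hyperbolic iff $r=k+1$.

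For item 2, I would use the defining relation $ke^{x_2}=r-1$, hence $1+ke^{x_2}=r$, to simplify
\[
f'(x_2)=\frac{r\bigl(1+(r-1)(1-x_2)\bigr)}{r^{2}}=1-\frac{(r-1)\,x_2}{r},\qquad x_2=\ln\!\Big(\tfrac{r-1}{k}\Big).
\]
Since the positive equilibrium exists we have $r>k+1>1$ and $x_2>0$, so $f'(x_2)<1$ automatically; therefore $|f'(x_2)|<1$ is equivalent to $f'(x_2)>-1$, i.e. to $x_2<\dfrac{2r}{r-1}$. Exponentiating and rearranging converts this into $k>(r-1)e^{-\frac{2r}{r-1}}$, which, combined with the existence constraint $k<r-1$, gives the stated stability interval. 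In the same way $f'(x_2)<-1$ is equivalent to $k<(r-1)e^{-\frac{2r}{r-1}}$, yielding the source case, and $f'(x_2)=-1$ to $k=(r-1)e^{-\frac{2r}{r-1}}$, the non-hyperbolic case.

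There is no deep obstacle here; the argument is a routine application of the one-dimensional linearization criterion. The points that require care are the algebraic simplification of $f'(x_2)$ via $ke^{x_2}=r-1$, tracking correctly the direction of the inequalities when passing to logarithms and dividing by $r-1>0$, and observing that the upper endpoint $r-1$ of the stability interval is exactly the threshold for $x_2>0$, which is why only the lower bound on $k$ is the genuinely new stability condition.
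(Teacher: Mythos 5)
Your proposal is correct and follows essentially the same route as the paper: compute $f'$, note $f'(0)=\tfrac{r}{1+k}$ and $f'(x_2)=\tfrac{r-(r-1)x_2}{r}$ (the paper writes this as $\tfrac{r-rx_2+x_2}{r}$), and translate $|f'|\lessgtr 1$ into the stated conditions on $r$ and $k$. Your version simply spells out the algebraic steps (using $ke^{x_2}=r-1$ and the sign observation $f'(x_2)<1$) that the paper leaves implicit.
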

\begin{proof}
It is clear that $f^{'}(x_1)=\frac{r}{k+1}$, so that the results in 1  can be easily  obtained.
For 2 one can see $f^{'}(x_2)=\frac{r-rx_2+x_2}{r}$ then  $\mid f^{'}(x_2)\mid<1\quad \text{if and only if}\quad  (r-1)e^{-\frac{2r}{r-1}}<k<r-1$ and the results can be got.
\end{proof}
  Now we will consider a situations that population is exposition to harvest by a constant rate harvesting  which is proportional to the respective population size therefore  the model (\ref{a3}) including the harvesting will be as the following :
\begin{equation}
\begin{aligned}
\label{a44}
x_{t+1}&=\frac{r x_{t }}{1+ke^{x_{t}}}-hx_t
\end{aligned}
\end{equation}
 Where $h$ is a positive  constant representing the intensity of removing  due to  hunting or removal. It is obvious that one cannot remove  more than the population density therefore $h\leq h_{max}<1,\ h_{max}$\ is the maximum removing amount.
 The model(\ref{a44}) has also two equilibria , the trivial equilibrium point $x_o$,which always exists,and the  unique positive equilibrium $x_h=ln(\frac{r-(1+h)}{k(1+h)})$ exists only when $\frac{r-(1+h)}{k(1+h)}>1$.
 Next lemma describes the behavior of the  equilibria of model(\ref{a44}).
\begin{lem}
 For the model (\ref{a44}),the equilibria, $x_o$, and $x_h$  are
\begin{enumerate}
\item  The equilibrium point $x_0=0,$\ is locally stable(sink)  point if $r<(k+1)(1+h)$, and  it is unstable(source) if $r>(k+1)(1+h)$,while it is non-hyperbolic point if $r=(k+1)(1+h)$.
\item The equilibrium point $x_h=ln(\frac{r-(1+h)}{k(1+h)}),$\ is locally stable(sink) point if $k\in (\frac{(r-(1+h))e^{-\frac{2r}{m}}}{1+h},\frac{r-(1+h)}{1+h})$, and it is unstable(source) point if $k<\frac{(r-(1+h))e^{-\frac{2r}{m}}}{1+h}$, while it is  non-hyperbolic point if $k=\frac{(r-(1+h))e^{-\frac{2r}{m}}}{1+h}$, where $m=(1+h)(r-(1+h))$.
\end{enumerate}
\end{lem}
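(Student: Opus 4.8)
The plan is to reuse the linearization test already applied to model~(\ref{a3}): writing $f(x)=\dfrac{rx}{1+ke^{x}}-hx$, an equilibrium is a sink when the multiplier satisfies $|f'|<1$, a source when $|f'|>1$, and non-hyperbolic when $|f'|=1$. Differentiating once gives
\[
f'(x)=\frac{r+rke^{x}-rxke^{x}}{(1+ke^{x})^{2}}-h .
\]

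For the trivial point I would simply substitute $x=0$, obtaining $f'(0)=\dfrac{r}{k+1}-h$. Since the removal intensity satisfies $h<1$ and $r,k>0$, one always has $f'(0)>-1$, so the sink condition $|f'(0)|<1$ reduces to the single inequality $f'(0)<1$, i.e.\ $r<(k+1)(1+h)$; reversing it gives the source case and equality the non-hyperbolic case. This is part~1, and the point worth stressing is precisely that the hypothesis $h<1$ is what makes the one-sided condition sufficient.

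For the positive equilibrium $x_h$ the key step is to exploit the fixed-point identity rather than plug in the logarithm directly: dividing $f(x_h)=x_h$ by $x_h\neq0$ yields $\dfrac{r}{1+ke^{x_h}}=1+h$, equivalently $ke^{x_h}=\dfrac{r-(1+h)}{1+h}$. Substituting this into $f'(x_h)$ and simplifying, every exponential cancels and one is left with the compact form
\[
f'(x_h)=1-\frac{m\,x_h}{r},\qquad m=(1+h)\bigl(r-(1+h)\bigr).
\]
Hence $|f'(x_h)|<1$ is equivalent to $0<\dfrac{m x_h}{r}<2$. The existence condition $\dfrac{r-(1+h)}{k(1+h)}>1$ forces $r>1+h$ (so $m>0$) and $x_h>0$, which makes the left inequality automatic; the right inequality $x_h<\dfrac{2r}{m}$ is, by monotonicity of $\ln$, the same as $k>\dfrac{(r-(1+h))e^{-2r/m}}{1+h}$. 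Intersecting with the existence bound $k<\dfrac{r-(1+h)}{1+h}$ produces the stated interval for the sink case, while $f'(x_h)<-1$ (resp.\ $f'(x_h)=-1$) exactly when that lower bound on $k$ is reversed (resp.\ is an equality), giving the source and non-hyperbolic cases.

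All of this is routine algebra; the only genuinely delicate moment is the cancellation that collapses $f'(x_h)$ to $1-\tfrac{m x_h}{r}$ via the fixed-point relation, and the bookkeeping ensuring that the existence of $x_h$ already disposes of one side of each hyperbolicity inequality.
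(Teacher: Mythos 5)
Your proposal is correct and follows essentially the same route as the paper: compute $f'$, evaluate at $x_0$ and at $x_h$ using the fixed-point relation, and apply the standard $|f'|\lessgtr 1$ test, with the useful extra care of noting that $h<1$ and the existence condition each dispose of one side of the inequality. In fact your simplification $f'(x_h)=1-\frac{m x_h}{r}$ is the correct one (consistent with the unharvested case $f'(x_2)=\frac{r-rx_2+x_2}{r}$ and with the stated stability intervals), whereas the paper's printed $f'(x_h)=\frac{r+x_h m}{r}$ contains a sign typo.
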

\begin{proof}
It is clear that $f^{'}(x)=\frac{r(1+ke^x)-rkxe^x}{(1+ke^x)^2}-h$, then $ f^{'}(x_0)=\frac{r}{(1+k)}-h$ and
$f^{'}(x_h)=\frac{r+x_h m}{r}$, therefore all results can be obtained.
\end{proof}
\section{Two Species Model,Prey-Predator Model}
In this section we will study in details the dynamics of the two species model discrete time case of model (\ref{a1}) with $n=m=b=1$. Thus the system can be written as
\begin{equation}
\begin{aligned}
\label{a5}
x_{t+1}&=\frac{r x_{t }}{(1+ke^{x_{t}})}-ay_{t}x_{t}\\
y_{t+1}& =-cy_{t}+dx_{t}y_t
\end{aligned}
\end{equation}

 The all parameter $ a,  r  , c ,d $\ and\ $ k$  are defined the  same as before. By solving the following algebraic equation one can get all equilibrium points of the model(\ref{a5}):
\begin{equation}
\begin{aligned}
\label{a6}
x&=\frac{r x}{(1+ke^{x})}-ayx\\
y& =-cy+dxy
\end{aligned}
\end{equation}
Therefore we have the following lemma.
\begin{lem}
 For all parameters values the equilibrium  points of the model (\ref{a5}) are
\begin{enumerate}
\item The trivial equilibrium point $e_0=(0,0)$\ always exists.
\item The boundary equilibrium point $e_1=(\ln(\frac{r-1}{k}),0)$\ exists only  when $r>1+k$.
\item The  unique positive equilibrium  point $e_2=(x^*,y^*)=(\frac{1+c}{d},\frac{r-(1+ke^{x^*})}{a(1+ke^{x^*})})$,which exists if $r>1+ke^{x^*}$.
\end{enumerate}
\end{lem}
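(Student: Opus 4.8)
The plan is to find all solutions $(x,y)$ of the equilibrium system \eqref{a6} by a standard case analysis on whether $y=0$ or $y\neq 0$. The second equation of \eqref{a6} is $y=-cy+dxy$, i.e. $y(1+c-dx)=0$, so either $y=0$ or $x=\frac{1+c}{d}$. I would split into these two cases and in each case solve the first equation.

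\emph{Case $y=0$.} The first equation of \eqref{a6} becomes $x=\frac{rx}{1+ke^{x}}$, i.e. $x\bigl(1-\frac{r}{1+ke^{x}}\bigr)=0$. This gives $x=0$ (the trivial point $e_0=(0,0)$, which clearly always exists) or $1+ke^{x}=r$, i.e. $e^{x}=\frac{r-1}{k}$, hence $x=\ln\!\bigl(\frac{r-1}{k}\bigr)$. Since we need $\frac{r-1}{k}>0$ for the logarithm to be defined (and in fact $e^{x}>0$ automatically, with $x$ real iff the argument is positive), and $k>0$ is assumed, this forces $r>1$; moreover $x=\ln\!\bigl(\frac{r-1}{k}\bigr)$ is a genuine equilibrium for any such $r$, but to land in the biologically relevant region (and to have $x$ of a definite sign) one records the existence condition as $r>1+k$, which is exactly the threshold at which $e_1$ crosses $e_0$. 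This yields $e_1=\bigl(\ln(\frac{r-1}{k}),0\bigr)$, existing when $r>1+k$.

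\emph{Case $x=\frac{1+c}{d}=:x^*$.} Substituting into the first equation of \eqref{a6}: $x^*=\frac{rx^*}{1+ke^{x^*}}-a y x^*$. Dividing by $x^*>0$ (note $x^*>0$ since $c,d>0$) and solving for $y$ gives $1+a y=\frac{r}{1+ke^{x^*}}$, hence $y^*=\frac{1}{a}\bigl(\frac{r}{1+ke^{x^*}}-1\bigr)=\frac{r-(1+ke^{x^*})}{a(1+ke^{x^*})}$. This is nonnegative (indeed positive) precisely when $r>1+ke^{x^*}$, which is the stated existence condition for $e_2=(x^*,y^*)$. Uniqueness is immediate: $x^*$ is determined, and then $y^*$ is determined.

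I do not expect any serious obstacle here; the only point requiring a little care is bookkeeping the existence inequalities so that they match the biological (positivity) constraints rather than merely the domain constraints of $\ln$, and noting that $e_2$ may have $x^*$-coordinate making $r>1+ke^{x^*}$ fail, in which case only $e_0$ and possibly $e_1$ survive. Since $x^*=\frac{1+c}{d}$ does not depend on $r$ or $k$, the condition $r>1+ke^{x^*}$ is a genuine restriction and cannot be absorbed into the others, so all three items of the lemma are accounted for.
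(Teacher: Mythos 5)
Your proposal is correct and follows exactly the route the paper intends: the paper gives no written proof for this lemma beyond the remark that one solves the algebraic system \eqref{a6}, and your case split on $y=0$ versus $x=\frac{1+c}{d}$ with the subsequent solution of the first equation is precisely that computation, carried out correctly. The only addition you make is the (sensible) clarification that the condition $r>1+k$ for $e_1$ comes from requiring $\ln\bigl(\frac{r-1}{k}\bigr)>0$ rather than merely from the domain of the logarithm, which matches the biological reading of the lemma.
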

In order to investigate  the dynamic behavior of the model(\ref{a5})  one has to compute the general Jacobian matrix of the model (\ref{a5}) at point (x,y). This is given by:
\begin{equation}
\label{a7}
J(x,y)=\left[\begin{array}{cc}
J_{11} &J_{12}\\
J_{21} & J_{22}
\end{array}\right]
\end{equation}
Where $J_{11}=\frac{r+rke^x-rkxe^x}{(1+ke^x)^2}-ay,\ J_{12}=-ax,\ J_{21}=dy,\ \text{and } J_{22}=dx-c$.\\

Next theorems give the local stability of $e_0, \text{and}\  e_1$ respectively.

\begin{theo}
For the model (\ref{a5}),the equilibrium point  $e_o$\ is
\begin{enumerate}
\item Locally stable (sink)  point if $r<(k+1)$, and $c<1$. It is unstable (source) point if $r>(k+1)$, and $c>1$,while it is non-hyperbolic point if $r=(k+1)\quad \text{or}\quad c=1$.\\
\item Saddle point if $r>(k+1)$, and $c<1$ or\ $r<(k+1)$, and $c>1$ .
\end{enumerate}
\end{theo}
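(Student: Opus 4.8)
The plan is to linearize the map \eqref{a5} at $e_0$ and read the stability off the spectrum of the Jacobian \eqref{a7}, which is where the whole statement comes from. First I would substitute $(x,y)=(0,0)$ into $J(x,y)$: since $x=0$ annihilates $J_{12}=-ax$, $y=0$ annihilates $J_{21}=dy$ and the term $-ay$ in $J_{11}$, and $e^{0}=1$, the matrix collapses to the diagonal form
\[
J(e_0)=\begin{pmatrix} \dfrac{r(1+k)}{(1+k)^2} & 0\\ 0 & -c\end{pmatrix}=\begin{pmatrix} \dfrac{r}{1+k} & 0\\ 0 & -c\end{pmatrix}.
\]
Hence the eigenvalues are immediately $\lambda_1=\dfrac{r}{1+k}$ and $\lambda_2=-c$, with no characteristic-polynomial computation needed.

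Second, I would invoke the standard linearized-stability classification for a fixed point of a discrete map (stability governed by the spectral radius of the Jacobian for a hyperbolic fixed point): $e_0$ is a sink when $|\lambda_1|<1$ and $|\lambda_2|<1$, a source when both moduli exceed $1$, non-hyperbolic as soon as some $|\lambda_i|=1$, and a saddle when the two moduli lie on opposite sides of $1$. Since $r,k,c>0$ we have $|\lambda_1|=\dfrac{r}{1+k}$, so $|\lambda_1|<1\iff r<k+1$, $|\lambda_1|>1\iff r>k+1$, $|\lambda_1|=1\iff r=k+1$; and $|\lambda_2|=c$, so the three cases correspond to $c<1$, $c>1$, $c=1$. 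Matching the four sign patterns of $\bigl(|\lambda_1|-1,\,|\lambda_2|-1\bigr)$ against the classification reproduces exactly the cases in the statement: sink iff $r<k+1$ and $c<1$; source iff $r>k+1$ and $c>1$; non-hyperbolic iff $r=k+1$ or $c=1$; saddle in the two mixed cases.

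There is no genuinely hard step: the diagonal structure of $J(e_0)$ trivializes the eigenvalue computation and the rest is bookkeeping. The only point worth flagging is that in the non-hyperbolic boundary cases $r=k+1$ or $c=1$ the linearization is inconclusive for asymptotic stability, so the theorem correctly asserts only non-hyperbolicity there; deciding stability on that borderline would require a center-manifold or normal-form reduction, which lies outside the scope of this statement.
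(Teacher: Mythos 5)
Your proposal is correct and follows exactly the same route as the paper: evaluate the Jacobian at $e_0$ to get the diagonal matrix with eigenvalues $\lambda_1=\frac{r}{1+k}$ and $\lambda_2=-c$, then read off the sink/source/saddle/non-hyperbolic cases from the moduli of the eigenvalues. Your remark that the linearization is inconclusive in the non-hyperbolic cases is a sensible caveat that the paper does not spell out, but otherwise the two arguments coincide.
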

\begin{proof}
It is clear that the Jacobian matrix at the point $e_0$ is
\begin{equation}
 \nonumber
J_{e_0}=\left[\begin{array}{cc}
\frac{r}{1+k} &0\\
0 & -c
\end{array}\right]
\end{equation}
so that the eigenvalues of $J_{e_0}$ are $\lambda_{1}=\frac{r}{1+k}$\ and $\lambda_2=-c$. Thus\ $\mid \lambda_{1}\mid<1 \ (\mid\lambda_{1}\mid>1),$\ if and only if $r<1+k\quad (r>1+k)$\ and $\mid \lambda_{2}\mid<1 \ (\mid\lambda_{2}\mid>1),$\ if and only if  $c<1\quad (c>1)$, as well as $\mid \lambda_{1}\mid=1\ \text{or}\quad\mid \lambda_{2}\mid=1\ \quad\text{if and only if  }\ r=1+k\quad\text{or}\ c=1$. Therefore the proof is finished.
\end{proof}

\begin{theo}
For model (\ref{a5}) the equilibrium point $e1$\ has the following:
\begin{enumerate}
\item The equilibrium point $e1$ is locally stable(sink)  point if $k\in I_1\bigcap I_2$.

\item The equilibrium point $e1$ is unstable (source) if $k\in I_3\bigcap I_5$
\item The equilibrium point $e1$ is saddle point if one of the following holds:\\
a)-$k\in I_1\bigcap I_5$\\
b)-$k\in I_1\bigcap I_4$\\
c)-$k\in I_3\bigcap I_2$\\
\item The equilibrium point $e1$ is non-hyperbolic point if $k=(r-1)e^{-\frac{2r}{r-1}},\quad  \text{or} \quad k=(r-1)e^{-\frac{c+1}{d}},\quad  \text{or} \quad k=(r-1)e^{-\frac{c-1}{d}} $.
\end{enumerate}
where $I_1=((r-1)e^{-\frac{2r}{r-1}},(r-1)),\quad  I_2=( (r-1)e^{-\frac{c+1}{d}},(r-1)e^{-\frac{(c-1)}{d}}),\quad I_3=(0,(r-1)e^{-\frac{2r}{r-1}})\quad I_4=((r-1)e^{-\frac{c-1}{d}},(r-1))\ \text{and}\quad I_5=(0,(r-1)e^{-\frac{c+1}{d}})$
\end{theo}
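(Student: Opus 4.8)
The plan is to linearize model~(\ref{a5}) at $e_1=\big(x_1,0\big)$ with $x_1=\ln\!\big(\tfrac{r-1}{k}\big)$ and read off its eigenvalues. Because $e_1$ sits on the predator-free axis $y=0$, the entry $J_{21}=dy$ in the general Jacobian~(\ref{a7}) vanishes there, so $J(e_1)$ is upper triangular and its eigenvalues are exactly its diagonal entries. Using the identity $1+ke^{x_1}=r$, which holds since $e^{x_1}=\tfrac{r-1}{k}$, the first diagonal entry collapses to $\lambda_1=J_{11}=\dfrac{r-(r-1)x_1}{r}$, the same quantity that governed the stability of $x_2$ in Lemma~1, and the second is $\lambda_2=J_{22}=dx_1-c$. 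The stability type of $e_1$ is then dictated entirely by the position of $|\lambda_1|$ and $|\lambda_2|$ relative to $1$.

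Next I would translate each condition on $\lambda_1,\lambda_2$ into a condition on $k$ via the strictly decreasing substitution $x_1=\ln\tfrac{r-1}{k}$. For $\lambda_1$: in the admissible range one always has $x_1>0$, hence $\lambda_1<1$; the condition $|\lambda_1|<1$ reduces to $0<x_1<\tfrac{2r}{r-1}$, which upon inverting the substitution is precisely $k\in I_1=\big((r-1)e^{-2r/(r-1)},\,r-1\big)$, the upper bound $k<r-1$ being automatic from the existence condition $\tfrac{r-1}{k}>1$ for $e_1$. Consequently, in the admissible range, $|\lambda_1|>1$ forces $\lambda_1<-1$, i.e. $k\in I_3=\big(0,(r-1)e^{-2r/(r-1)}\big)$, and $|\lambda_1|=1$ forces $\lambda_1=-1$, i.e. $k=(r-1)e^{-2r/(r-1)}$. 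For $\lambda_2=dx_1-c$: $|\lambda_2|<1$ is $\tfrac{c-1}{d}<x_1<\tfrac{c+1}{d}$, which becomes $k\in I_2=\big((r-1)e^{-(c+1)/d},\,(r-1)e^{-(c-1)/d}\big)$; the case $\lambda_2>1$ becomes $k\in I_5=\big(0,(r-1)e^{-(c+1)/d}\big)$; the case $\lambda_2<-1$ becomes $k\in I_4=\big((r-1)e^{-(c-1)/d},\,r-1\big)$; and $|\lambda_2|=1$ becomes $k=(r-1)e^{-(c+1)/d}$ or $k=(r-1)e^{-(c-1)/d}$.

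Finally I would assemble the four regimes from the two-eigenvalue dictionary. The point $e_1$ is a sink precisely when both eigenvalues lie strictly inside the unit circle, i.e. $k\in I_1\cap I_2$; it is a source when both lie strictly outside, the representative sub-case being $k\in I_3\cap I_5$; it is a saddle when exactly one lies inside, which yields the three combinations $I_1\cap I_5$, $I_1\cap I_4$ and $I_3\cap I_2$ listed in the statement; and it is non-hyperbolic when $|\lambda_1|=1$ or $|\lambda_2|=1$, i.e. when $k$ equals one of the three indicated critical values. I do not anticipate a substantive obstacle: the argument is elementary linear algebra followed by monotone rearrangements of inequalities. The only delicate point is the bookkeeping among the five overlapping intervals $I_1,\dots,I_5$, in particular noticing that the existence constraint $k<r-1$ already lies inside $I_1$, so that the branches $\lambda_1=+1$ and $\lambda_1>1$ with $k>r-1$ never arise in the domain, and recognizing that the statement records representative parameter configurations rather than a complete partition of the admissible range of $k$.
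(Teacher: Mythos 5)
Your proposal is correct and follows essentially the same route as the paper: evaluate the Jacobian at $e_1$, read off the two eigenvalues, and translate $|\lambda_i|\lessgtr 1$ into the intervals $I_1,\dots,I_5$ via the monotone substitution $x_1=\ln\frac{r-1}{k}$. In fact your version is slightly more careful than the printed proof, since you justify the triangularity by $J_{21}=dy=0$ at $y=0$ (the paper's displayed matrix has $d$ in that entry) and your $\lambda_1=\frac{r-(r-1)x_1}{r}$ is the expression consistent with Lemma~1 and with the stated interval $I_1$, whereas the paper writes $\lambda_1=\frac{r-x_1}{r}$.
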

\begin{proof}
The Jacobian matrix at the point $e_1$ is
\begin{equation}
\nonumber
J_{e_1}=\left[\begin{array}{cc}
\frac{r-\ln(\frac{r-1}{k})}{r}&-a\ln(\frac{r-1}{k}) \\
d & d \ln(\frac{r-1}{k})-c
\end{array}\right]
\end{equation}
then the eigenvalues of $J_{e_{1}}$ are $\lambda_1=\frac{r-\ln(\frac{r-1}{k})}{r},\text{and}\quad \lambda_2=d \ln(\frac{r-1}{k})-c$\ then $\mid\lambda_1\mid<1\quad\text{if and only if }\quad (r-1)e^{-\frac{2r}{r-1}}<k<r-1$\ and $\mid\lambda_2\mid<1\quad\text{if and only if }\quad (r-1)e^{-\frac{c+1}{d}}<k<(r-1)e^{-\frac{(c-1)}{d}}$. Therefore all results can be obtained directly.
\end{proof}
In order to discuss the dynamic behavior of the unique positive equilibrium, the next lemma is needed.
\begin{lem}
\label{a8}
Let $F(\lambda)=\lambda^{2}+p\lambda+q$ Suppose that $F(1)>0, \lambda_{1},\lambda_{1} $, are  roots of $F(\lambda)=0$ then\\
1-$\left|\lambda_{1}\right| <1$ and  $\left|\lambda_{2}\right| <1$ if and only if $F(-1)>0$ and $q<1$\\
2-$\left|\lambda_{1}\right| >1$ and $\left|\lambda_{2}\right| <1$(or $\left|\lambda_{1}\right| <1$ and $\left|\lambda_{2}\right| >1$ )
if and only if $F(-1)<0$ \\
3-$\left|\lambda_{1}\right| >1$ and $\left|\lambda_{2}\right| >1$if and only if $F(-1)>0$ and $q>1$\\
4-$\lambda_{1}=-1$ and $\left|\lambda_{2}\right|\ne 1$ if and only if $F(-1)=0$ and $p\ne 0,2$
\end{lem}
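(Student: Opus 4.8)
The plan is to reduce the four statements to the classical root-location analysis for a real quadratic, evaluated at the points $1$ and $-1$. First I would record the elementary consequences of $F(\lambda)=\lambda^{2}+p\lambda+q=(\lambda-\lambda_{1})(\lambda-\lambda_{2})$: Vieta's relations $\lambda_{1}+\lambda_{2}=-p$ and $\lambda_{1}\lambda_{2}=q$, together with $F(1)=(1-\lambda_{1})(1-\lambda_{2})$ and $F(-1)=(1+\lambda_{1})(1+\lambda_{2})$. The standing hypothesis $F(1)>0$ is then used throughout: for real roots it says the two roots lie on the same side of $1$, and for complex roots it holds automatically.

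Next I would split into two exhaustive cases. \textbf{Complex conjugate roots:} here $\lambda_{2}=\overline{\lambda_{1}}$, so $q=\lambda_{1}\overline{\lambda_{1}}=|\lambda_{1}|^{2}=|\lambda_{2}|^{2}\ge 0$ and both moduli equal $\sqrt{q}$; moreover $F(-1)=|1+\lambda_{1}|^{2}>0$ automatically, and neither the saddle configuration of (2) nor the boundary configuration of (4) can occur. In this regime statements (1) and (3) both reduce to comparing $q$ with $1$, which is immediate. \textbf{Real roots:} this is where the work lies. Since the parabola opens upward, $F(1)>0$ forces ``both roots $<1$'' or ``both roots $>1$''; $F(-1)>0$ forces ``both roots $>-1$'' or ``both roots $<-1$''; and $F(-1)<0$ forces exactly one root in $(-\infty,-1)$ and one in $(-1,\infty)$. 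Intersecting these alternatives leaves, when $F(-1)>0$, precisely the three configurations ``both in $(-1,1)$'', ``both $<-1$'', ``both $>1$''; the sign of $q=\lambda_{1}\lambda_{2}$ then separates the first ($|q|<1$, hence $q<1$) from the other two (both give $q>1$), which yields (1) and (3). When $F(-1)<0$, combining with $F(1)>0$ forces $\lambda_{1}<-1<\lambda_{2}<1$, i.e. $|\lambda_{1}|>1$ and $|\lambda_{2}|<1$, which is (2). For (4), $F(-1)=0$ is equivalent to $-1$ being a root, say $\lambda_{1}=-1$; then $\lambda_{2}=1-p$ from $\lambda_{1}+\lambda_{2}=-p$, so $|\lambda_{2}|\ne 1$ exactly when $1-p\notin\{-1,1\}$, i.e. $p\ne 0,2$. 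Each equivalence is then finished by checking the converse direction, which amounts to substituting the assumed modulus conditions back into the factorizations of $F(1)$ and $F(-1)$.

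The main difficulty is organizational rather than conceptual: in the real-root case one must enumerate the finitely many arrangements of two real numbers relative to $\{-1,1\}$ and keep the sign bookkeeping of $F(1)$, $F(-1)$, and $q$ consistent, ensuring that no configuration is omitted or double-counted and that a root placed exactly at $\pm 1$ is handled compatibly with part (4). I would present the real-root analysis as a compact case table rather than prose, to minimise the chance of a sign slip.
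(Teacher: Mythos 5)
Your proposed argument is correct, but it is worth noting that the paper does not actually prove this lemma at all: its entire ``proof'' is the citation \emph{see \cite{xiao}}, since this is the standard Jury/Schur--Cohn root-location criterion for a real quadratic. Your write-up therefore supplies what the paper outsources. The argument itself is sound: the factorizations $F(1)=(1-\lambda_{1})(1-\lambda_{2})$ and $F(-1)=(1+\lambda_{1})(1+\lambda_{2})$ together with $q=\lambda_{1}\lambda_{2}$ are exactly the right invariants; the complex-conjugate case is handled cleanly (equal moduli $\sqrt{q}$, $F(-1)=|1+\lambda_{1}|^{2}>0$ automatic, so (2) and (4) are vacuous there); and in the real case the hypothesis $F(1)>0$ correctly prunes the configurations to ``both in $(-1,1)$'', ``both $<-1$'', ``both $>1$'' when $F(-1)>0$, separated by the sign of $q-1$, and to $\lambda_{1}<-1<\lambda_{2}<1$ when $F(-1)<0$. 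Part (4) via $\lambda_{2}=1-p$ is also right (one could even note that $F(1)>0$ already forces $p\neq 0$, so that half of the condition is automatic, but stating it does no harm). The only thing to be careful about when you expand the sketch into the promised case table is the converse directions of (1)--(3), where you must verify that a pair of real roots with the asserted moduli and $F(1)>0$ cannot sit in a configuration you did not list (e.g.\ for (3), one root $>1$ and one $<-1$ is excluded precisely because it would make $F(1)<0$); you flag this, and the bookkeeping closes. What the citation buys the paper is brevity; what your version buys is a self-contained, checkable statement, which is preferable given that the lemma is then used to classify the positive equilibrium.
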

\begin{proof}
see$\cite{xiao}$.
\end{proof}

\begin{theo}

For the unique positive equilibrium point $e_{2}$ of the model (\ref{a5}) we have :
\begin{enumerate}
\item The equilibrium point  $e_2$ is locally asymptotically stable (sink)  point if and only if  $d>N,\ \text{and}\quad r\in S$.
\item The equilibrium point  $e_2$ is unstable (source) point if and only if  $d>N,\ \text{and}\quad r>max\{\frac{M_2}{M_1},\frac{N_2}{N_1},k_1\}$.
\item The equilibrium point  $e_2$ is saddle point if and only if  $d>N,\ \text{and}\quad k_1<r<\frac{M_2}{M_1}$.
\item The equilibrium point  $e_2$ is non-hyperbolic point if these conditions are hold:\\
i)-$d\neq\ \frac{2ke^{x^*}}{k_1}$\\
ii)$r=\frac{M_2}{M_1}$\\
iii) $r\neq \frac{2 k_{1}^{2}}{kx^* e^{x^*}}\quad \text{or}\quad r\neq \frac{4k_{1}^{2}}{kx^*e^{x^*}}$.
\end{enumerate}
Where $k_1=1+ke^{x^*},\quad N=\frac{2ke^{x^*}}{k_1}\quad S=(Max\{k_1,\frac{M_2}{M_1}\},\frac{N_2}{N_1}),\quad M_1=dk_1-2x^*ke^{x^*},\ M_2=dx^* k_{1}^{2}-4k_{1}^{2},\quad N_1=d k_1-ke^{x^*},\quad \text{and}\quad \ N_2=d k_{1}^{2}$
\end{theo}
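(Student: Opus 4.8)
The strategy is the standard one for planar maps: linearize (\ref{a5}) at $e_2=(x^*,y^*)$ and feed the characteristic polynomial into the Jury-type criterion of Lemma~\ref{a8}. First I would evaluate the Jacobian (\ref{a7}) at $e_2$, exploiting the two equilibrium identities that make its entries manageable. From the second equation of (\ref{a6}) one has $dx^*=1+c$, hence $J_{22}=dx^*-c=1$; from the first equation $ay^*=\frac{r-k_1}{k_1}$ with $k_1=1+ke^{x^*}$, which after substitution collapses $J_{11}$ to $J_{11}=1-\frac{rkx^*e^{x^*}}{k_1^{2}}$. Together with $J_{12}=-ax^*$ and $J_{21}=dy^*$, the characteristic polynomial is $F(\lambda)=\lambda^{2}+p\lambda+q$ with $p=-(J_{11}+J_{22})=\frac{rkx^*e^{x^*}}{k_1^{2}}-2$ and $q=J_{11}J_{22}-J_{12}J_{21}=1-\frac{rkx^*e^{x^*}}{k_1^{2}}+\frac{dx^*(r-k_1)}{k_1}$.

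Next I would check the hypothesis of Lemma~\ref{a8}, namely $F(1)>0$: a short cancellation gives $F(1)=1+p+q=\frac{dx^*(r-k_1)}{k_1}$, which is positive precisely because $e_2$ exists only when $r>1+ke^{x^*}=k_1$. With $F(1)>0$ secured, Lemma~\ref{a8} reduces the entire classification of $e_2$ to the signs of $F(-1)$ and of $q-1$ (and, in the borderline case, whether $p\in\{0,2\}$): $e_2$ is a sink iff $F(-1)>0$ and $q<1$; a source iff $F(-1)>0$ and $q>1$; a saddle iff $F(-1)<0$; and non-hyperbolic of flip type iff $F(-1)=0$ and $p\neq 0,2$.

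The remaining work is to recast these inequalities in the notation of the statement. Clearing the denominator $k_1^{2}$ one finds $k_1^{2}(q-1)=rx^*(dk_1-ke^{x^*})-dx^*k_1^{2}=rN_1-N_2$ and $k_1^{2}F(-1)=rx^*(dk_1-2ke^{x^*})-(dx^*k_1^{2}-4k_1^{2})=rM_1-M_2$, both affine in $r$. Under the hypothesis $d>N=\frac{2ke^{x^*}}{k_1}$ the $r$-coefficients $M_1$ and $N_1$ are both positive, so $F(-1)>0\iff r>M_2/M_1$ (vacuous when $M_2\le 0$) and $q<1\iff r<N_2/N_1$. Intersecting with the existence constraint $r>k_1$ gives the sink region $r\in S=\bigl(\max\{k_1,M_2/M_1\},N_2/N_1\bigr)$; the source case becomes $r>\max\{M_2/M_1,N_2/N_1,k_1\}$; the saddle case $k_1<r<M_2/M_1$; and the non-hyperbolic case $r=M_2/M_1$ together with $p\neq 0,2$, i.e. $r\neq 2k_1^{2}/(kx^*e^{x^*})$ and $r\neq 4k_1^{2}/(kx^*e^{x^*})$, plus $M_1\neq 0$, i.e. $d\neq N$.

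I expect the main obstacle to be bookkeeping rather than anything conceptual: simplifying $J_{11}$ and $q$ through the equilibrium identities without slips, clearing the $k_1$ and $k_1^{2}$ denominators consistently, and—crucially—tracking the signs of $M_1$ and $N_1$, since it is exactly the requirement that these be positive (so that dividing $rM_1>M_2$ and $rN_1<N_2$ by them does not reverse the inequalities) that forces the standing hypothesis $d>N$. A minor further point is that the ``only if'' directions require ruling out the complementary regimes ($d\le N$, or $r$ outside the stated ranges) as producing a sink, but this is immediate from Lemma~\ref{a8} being an equivalence together with the same sign analysis.
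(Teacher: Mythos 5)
Your proposal follows essentially the same route as the paper's proof: evaluate the Jacobian (\ref{a7}) at $e_2$ (where $J_{22}=1$ by $dx^*=1+c$), form $F(\lambda)=\lambda^2+p\lambda+q$, note $F(1)=adx^*y^*>0$ from the existence condition $r>k_1$, and apply Lemma~\ref{a8} by reducing $F(-1)>0$ and $q<1$ to $r>M_2/M_1$ and $r<N_2/N_1$ under $d>N$. Your version is if anything slightly tidier, since you substitute $ay^*=\frac{r-k_1}{k_1}$ at the outset and make explicit that $d>N$ is exactly what keeps the $r$-coefficients positive so the inequalities do not flip (and, like the paper's own computation, your cleared form $rx^*(dk_1-2ke^{x^*})-M_2$ shows the coefficient of $r$ is really $x^*(dk_1-2ke^{x^*})$ rather than the $M_1$ literally defined in the theorem statement, a factor-of-$x^*$ discrepancy that does not affect the sign analysis).
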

\begin{proof}
The Jacobian matrix at the unique positive equilibrium point is given by
\begin{equation}
\nonumber
J_{e_2}=\left[\begin{array}{cc}
\frac{rk_1 -rx^*ke^{x^*}}{k_{1}^{2}}-ay^*&-ax^* \\
dy^* &1
\end{array}\right]
\end{equation}
So that the characteristic polynomial of $J_{e_3}$ is $$F(\lambda)=\lambda^2+p\lambda+q$$
where $p=ay^*+\frac{rx^*ke^{x^*}}{k_{1}^{2}}-\frac{r}{k_{1}}-1$\ and \ $q=\frac{r}{k_{1}}-\frac{rx^*ke^{x^*}}{k_{1}^{2}}-ay^*+adx^*y^*$.\\
It is easy to see that $F(1)=adx^*y^* $ , hence \ $F(1)>0$.\\ Now
$F(-1)>0\quad \text{if and only if}\quad 2-2ay^*+\frac{2r}{k_1}- \frac{2rx^*ke^{x^*}}{k_{1}^{2}}+adx^*y^* >0\quad \text{if and only if}\quad 4k_1^{2}-2rx^*ke^{x^*}+dx^*rk_1-dx^*k_1^{2}>0\quad \text{if and only if}\quad r(dx^*k_1-2kx^*e^{x^*})>dx^*k_1^{2}-4k_1^{2}\quad \text{if and only if}\quad  r>\frac{M_2}{M_1} \quad \text{with}\quad d>N$.  $\\
Q<1\quad \text{if and only if}\quad \frac{r}{k_1}- \frac{rx^*ke^{x^*}}{k_{1}^{2}}- ay^*+ +adx^*y^* <1\quad \text{if and only if}\quad -rkx^*e^*+dx^*rk_1-dx^*k^2_1<0 \text{if and only if}\quad  r(dx^*k_1-kx^*e^{x^*})<dx^*k_1^{2} \quad \text{if and only if}\quad   r< \frac{N_2}{N_1}\quad \text{with}\quad  d>N$.\\
According to lemma (\ref{a8}) the proof is  finished.
\end{proof}
\section{An optimal harvesting approach}
In this section we will extend the model(\ref{a1}) to an optimal control problem  and will discuss the optimal harvesting management
of renewable resources.We assume that the population is harvested or removed with the harvesting rate $h_t$, which represents  our control variable.\\ For the single species  the model (\ref{a3}) including the harvesting effect  becomes :
\begin{equation}
\begin{aligned}
\label{c1}
x_{t+1}&=\frac{r x_{t }}{1+ke^{x_{t}}}-h_tx_t
\end{aligned}
\end{equation}
The $ x_{t },\ r, $\ and $k$\  are defined as before. In this problem the free terminal value problem is discussed and the terminal time $T$ is specified. The aim is to maximize the following  objective functional  $$ J(h)=\sum_{t=1}^{T-1} c_1h_tx_t-c_2h^2_t$$
 where\  $c_1h_tx_t$  represents the amount of money that one has to obtain, and\ $ c_2h^2_t$ is the cost of
catching and supporting the animal.  $c_1$\ and $c_2$ are positive constants. The control variable is subject to the constraint $$0\leq h_t\leq h_{Max}$$
 Now according to the discrete version of Pontryagin's maximum principle \cite{lenhart}, the Hamiltonian functional for this problem is given by
\begin{equation}
\begin{aligned}
\label{c22}
\mathcal{H}_t&=c_1h_tx_t-c_2h^2_t +\lambda_{t+1}(\frac{r x_{t }}{1+ke^{x_{t}}}-h_tx_t),\quad t=0,1,2,....T-1
\end{aligned}
\end{equation}
Where $\lambda_{t}=c_1h_t+\lambda_{t+1}(\frac{r+r ke^{x_{t}}-r x_tke^{x_{t}}}{(1+ke^{x_{t}})^2}-h_t,\quad$ is the adjoint variable or shadow price\cite{clark}. Then the characterization of the optimal control solution is
$$h^{*}_{t} =
\left\{
\begin{array}{lll}
		0 & \mbox{if }\frac{c_1 x_{t}-\lambda_{t+1}x_t}{2c_{2}} \leq 0 \\
		\frac{c_1 x_{t}-\lambda_{t+1}x_t}{2c_{2}} & \mbox{if } 0< \frac{c_1 x_{t}-\lambda_{t+1}x_t}{2c_{2}}<h_{Max}\\
		h_{Max}& \mbox{if } h_{Max}< \frac{c_1 x_{t}-\lambda_{t+1}x_t}{2c_{2}}
\end{array}
\right.$$

 In order to extend the  two species model to an optimal control problem, the model (\ref{a5})with control harvesting variable  will become as the following:
\begin{equation}
\begin{aligned}
\label{c2}
x_{t+1}&=\frac{r x_{t }}{(1+ke^{x_{t}})}-ay_{t}x_{t}-h_tx_t\\
y_{t+1}& =-cy_{t}+dx_{t}y_t
\end{aligned}
\end{equation}
Our aim in this problem is to get an optimal harvesting amount for that we  will  maximize the following cost functional $$ J(h_t)=\sum_{t=1}^{T-1} c_1h_tx_t-c_2h^2_t$$ subject to the state equations  (\ref{c2}) with control constraint $$0\leq h_t\leq h_{Max}<1$$  All  terms and parameters are  as before.So that the Hamiltonian functional will be as the following:
\begin{equation}
\begin{aligned}
\label{a66}
\mathcal{H}_t&=\sum_{t=1}^{T-1} c_1h_tx_t-c_2h^2_t +\lambda_{1,t+1}(\frac{r x_{t }}{1+ke^{x_{t}}}-h_tx_t)\\
& +\lambda_{2,t+1}(-cy_{t}+dx_{t}y_t)
\end{aligned}
\end{equation}
Where $\lambda_{1}$ and  $\lambda_{2}$ are the adjoint functions that satisfy :
\begin{equation}
\begin{aligned}
\label{a77}
\lambda_{1,t}&=c_1h_t+\lambda_{1,t+1} (\frac{r+r ke^{x_{t}}-r x_tke^{x_{t}}}{(1+ke^{x_{t}})^2}-h_t)+\lambda_{2,t+1} dy_{t}\\
\lambda_{2,t}&= \lambda_{1,t+1} (-a_{1} x_{t})+\lambda_{2,t+1} (-c+d x_{t})\\
\lambda_{1,T}&= \lambda_{2,T}=0.
\end{aligned}
\end{equation}

Furthermore the  characterization of the optimal harvesting solution $h^{*}$ satisfies:
$$h^{*}_{t} =
\left\{
\begin{array}{lll}
		0 & \mbox{if }\frac{x_{t}(c_1-\lambda_{1,t+1}) }{2c_{2}} \leq 0 \\
		\frac{x_{t}(c_1-\lambda_{1,t+1}) }{2c_{2}} & \mbox{if } 0< \frac{x_{t}(c_1-\lambda_{1,t+1}) }{2c_{2}}<h_{Max}\\
		h_{Max}& \mbox{if } h_{Max}< \frac{x_{t}(c_1-\lambda_{1,t+1}) }{2c_{2}}
\end{array}
\right.$$
 An iterative method in \cite{lenhart}\ is used to get the optimal control  with corresponding optimal state solutions  of the above optimal control problems at time $t$   by  maximizing the Hamiltonian functional at that $t$ numerically.

\section{Numerical Simulations}
In this section we will illustrate  the theoretical findings numerically for various set of  parameters.
For the local stability of the equilibrium point $e_0$ of the model(\ref{a5}), we choose this set of values $a=.1;r=0.9;k=0.01;d=1.2;c=0.01\quad \text{and}\quad (x_0,y_0)=(0.3,0.01)$\quad so that by (1) in theorem(1) the point is sink. For the equilibrium point $e_1$ this set of values $r=1.9;a=0.1;;c=0.2,d=2,k=0.6 \quad \text{and}\quad (x_0,y_0)=(0.9,0.4)$  are used. Then according to the condition (1) in  theorem (2) the point is sink. Figures 1-2 show  the  locally stability of $e_0$ ,and  $e_1$ respectively. For the  unique positive equilibrium point $e_2$ the set of values $ r=5;a =.1; k=2; c=.61; d=3;  \quad \text{and}\quad (x0,y0)=(0.53,1.9) $\ are chosen. According to (1) in theorem (3). The local stability of $e_2$  is shown in Figure 3. Other sets of values can be chosen to show the local stability of $e_0,\ e_1\quad \text{and}\quad  e_2$. \\
 For the optimal control problem an iterative numerical method in \cite{lenhart} is used to determine the optimal solutions with corresponding state solutions. For the control problem of single species we choose this set of values of parameters  $r =1.999; k=.8;c1=0.1;c2=0.01,x_0=0.1\quad \text{and}\quad T=80$ so that the total optimal objective functional $Jopt$ is found equal to $0.0412$. In Figure 4 the prey population density with control , without control and with constant harvesting is plotted. Figure 5 shows the control variable as a function of time.\\
 For the optimal control problem of two species model we choose this set of values of parameters $ a=0.1; k=2.1; c=0.5; d=2.9; r=5.2;c1=0.025;c2=0.08, (x_0,y_0)=(0.5,0.8) \quad\text{and}\quad T=80 $. So that the total optimal objective functional $Jopt$ is found equal to  $0.04121$. Figures 6-7  shows the prey population and the predator population  with control ,without control and with constant control respectively, and  Figure 8 shows the control variable as a function of time.\\ Finally Table 1 contains  the total optimal objective functional  and other different total harvesting amount strategies of both control problems by using the same values of the parameters in each problem.

\begin{figure}[ph]
\includegraphics[scale=0.4]{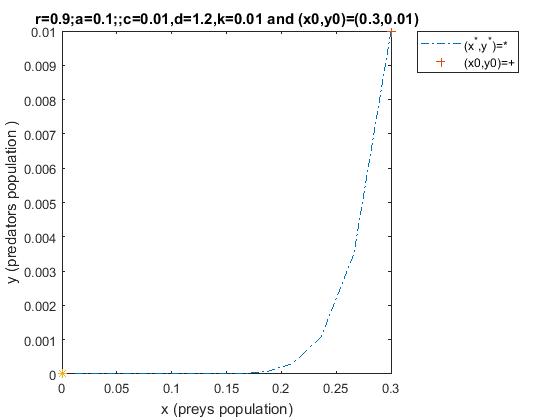}
\caption{\small{This figure shows the local stability of the equilibrium point $e_0$ of the model (\ref{a5}). }}
\vspace{-.2cm}
\end{figure}
\newpage
\begin{figure}[ph]
\includegraphics[scale=0.4]{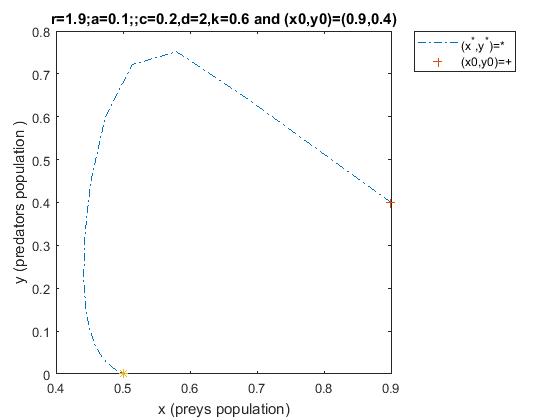}
\caption{\small{This figure shows the local stability of the equilibrium point $e_1$ of the model (\ref{a5})} }
\vspace{-.2cm}
\end{figure}
\begin{figure}[ph]
\includegraphics[scale=0.4]{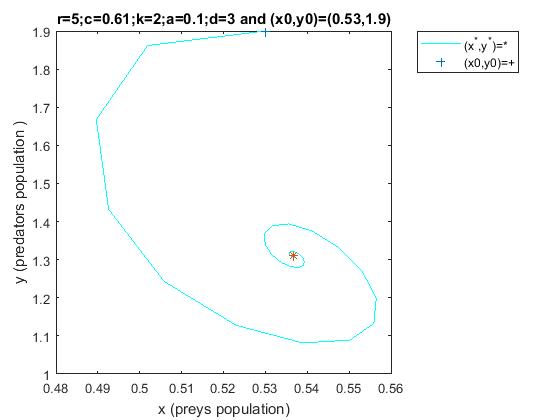}
\caption{\small{This figure shows the local stability of the unique equilibrium point $e_2$ of the model (\ref{a5})} }
\vspace{-.2cm}
\end{figure}
\newpage
\begin{figure}[ph]
\includegraphics[scale=0.4]{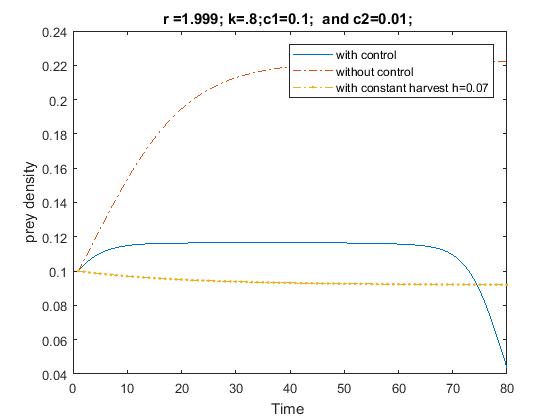}
\caption{\small{This figure shows the  affect of different  harvesting variables on the prey density in model (\ref{c1}). All values of parameters are the same in all cases.}  }
\vspace{-.2cm}
\end{figure}
\begin{figure}[ph]
\includegraphics[scale=0.4]{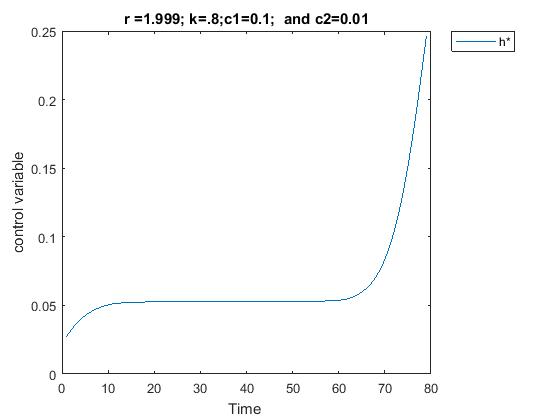}
\caption{\small{The optimal control variable of single species problem is shown function of time. }}
\vspace{-.2cm}
\end{figure}
\newpage
 \begin{figure}[ph]
\includegraphics[scale=0.4]{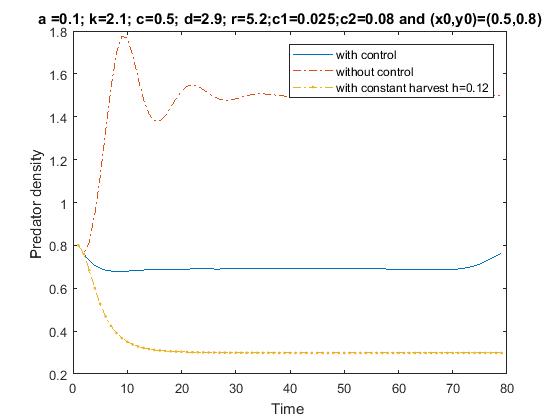}
\caption{\small{This figure shows the  affect of harvesting variable on the predator density in model (\ref{c2}). All values of parameters are the same}  }
\vspace{-.2cm}
\end{figure}
\begin{figure}[ph]
\includegraphics[scale=0.4]{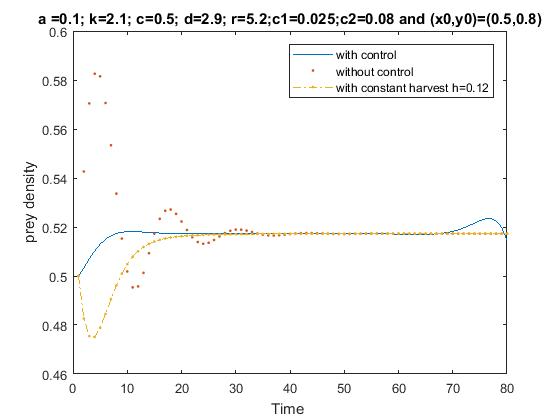}
\caption{\small{This figure shows the  affect of different harvesting variables on the prey density in model (\ref{c2}). All values of parameters are the same in all cases.}  }
\vspace{-.2cm}
\end{figure}
\begin{figure}[ph]
\includegraphics[scale=0.4]{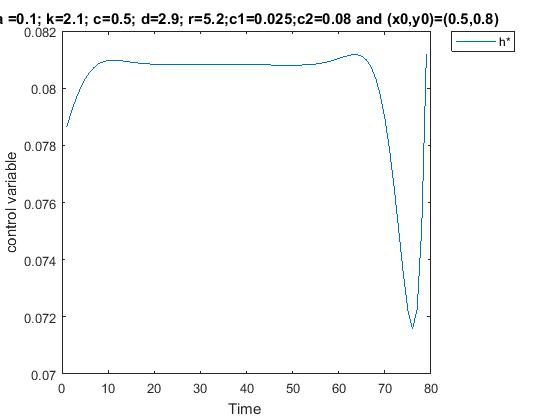}
\caption{\small{This shows the  optimal control solution of two species problem as function of time}}
\vspace{-.2cm}
\end{figure}
\newpage
    \begin{table}[ht]
\begin{center}
{\begin{tabular}{|l|l|l|l|l|l|}
    \hline
          \multicolumn{2}{c|}{\scriptsize{single species model}} &
      \multicolumn{2}{c|}{\scriptsize{Two species model}}  \\
        \cline{1-4}
   \scriptsize{ The harvesting  variable}&\scriptsize{The objective functional (J) }&\scriptsize{The harvesting  variable }&\scriptsize{The objective functional(J)}  \\    \hline

    \scriptsize{$h_t=h*$}&\scriptsize{$Jopt=0.0491$ }&\scriptsize{$h_t=h*$ }&\scriptsize{$Jopt=0.04121$}  \\    \hline
    \scriptsize{$h_t=0.065$}&\scriptsize{$J=0.0449$ }&\scriptsize{$h_t=0.12$ }&\scriptsize{$J=0.0306$}  \\    \hline
   \scriptsize{$h_t=0.06$}&\scriptsize{$J=0.04524$ }&\scriptsize{$h_t=0.1$ }&\scriptsize{$J=0.0386$}  \\    \hline
   \scriptsize{$h_t=0.058$}&\scriptsize{$J=0.04522$ }&\scriptsize{$h_t=0.09$ }&\scriptsize{$J= 0.04052$}  \\    \hline
   \scriptsize{$h_t=0.055$}&\scriptsize{$J=0.0450$ }&\scriptsize{$h_t=0.08$ }&\scriptsize{$J= 0.04118$}  \\    \hline
   \scriptsize{$h_t=0.05$}&\scriptsize{$J=0.0442$ }&\scriptsize{$h_t=0.07$ }&\scriptsize{$J= 0.04054$}  \\    \hline
     \end{tabular}}
      \caption{\small{This table shows the results of the optimal harvesting amount with different  constant harvesting. }}
      \end{center}
\end{table}
\newpage
\section{Conclusion}
In this paper the definition of almost global asymptotically stable of an equilibrium point is introduced with examples. We have also investigate biological models in discrete time case for one species  with a modification growth function and  for two species, prey-predator, model.We have studied the dynamics behavior of the  equilibrium points for each model. after that we have extended these model to an optimal control problems. We have used the Pontaygin's principle maximum to get the optimal solutions numerically. Finally one can investigate the dynamics behavior of the continuous time case as well as  the other values of $n,m,$\ and $b$ in future work.

\bibliographystyle{unsrt}
\bibliography{sadiqalnassir}
\end{document}